\newif\ifpictures
\newif\ifcomment
\author{Khazhgali Kozhasov* \protect\footnote{*C\MakeLowercase{orresponding author}.}and Jean Bernard Lasserre}\thanks{Research of the second author was funded by the European Research Council (ERC) under the European Union's Horizon 2020 research and innovation program (grant agreement ERC-ADG 666981 TAMING)}
\address{Khazhgali Kozhasov, Technische Universit\"at Braunschweig, Institut f\"ur Analysis und Algebra, Universit\"atsplatz 2, 38106 Braunschweig,
 Germany\medskip}
\email{k.kozhasov@tu-braunschweig.de}
\address{Jean Bernard Lasserre, LAAS-CNRS, Universit\'e de Toulouse, 7 avenue du colonel Roche, F-31400 Toulouse, France\medskip}
\email{lasserre@laas.fr}
\subjclass[2010]{49Q10,  65K10, 90C25, 26B15,  28A75,  97G30}
\keywords{Nonnegative homogeneous polynomials, sublevel sets, Lebesgue volume, orthogonally invariant norms, extremal properties}
\title[Nonnegative forms with sublevel sets of minimal volume]{Nonnegative forms with sublevel sets\\ of minimal volume}
\begin{document}

\maketitle

\begin{changemargin}{1.5cm}{1.5cm}
{\bf\noindent Abstract.}
We show that the Euclidean ball has the smallest volume among sublevel sets of nonnegative forms
of bounded Bombieri norm as well as among sublevel sets of sum of squares forms whose Gram matrix has bounded Frobenius or nuclear (or, more generally, $p$-Schatten) norm. These 
volume-minimizing properties of the Euclidean ball with respect to its representation (as a sublevel set of 
a form of fixed even degree) complement its numerous intrinsic geometric properties. We also provide a probabilistic interpretation of the results.
\end{changemargin}
\vspace{1cm}

\section*{Introduction}

It is well-known that the unit Euclidean ball $\textrm{B}_n = \{x\in \R^n: \sum_{i=1}^n x_i^2\leq 1\}$ 
has numerous (intrinsic) geometric properties. For example, $\mathrm{B}_n$ has the smallest surface area among all domains in $\R^n$ of a given volume or, equivalently, it has the largest volume among all domains of a given surface area. Hilbert and Cohn-Vossen \cite{Hilbert-52} describe ten more geometric properties of $\mathrm{B}_n$ or of its boundary $\partial\textrm{B}_n=\{x\in \R^n:\sum_{i=1}^n x_i^2=1\}$, the Euclidean sphere. In \cite{parsimony} it
was shown that $\textrm{B}_n$ exhibits some interesting extremal properties relative to its representation as a sublevel set of a nonnegative form.

More generally, in \cite{parsimony} the author was interested in properties of 
$n$-variate forms $f$ of a given degree whose sublevel set $\{f\leq 1\}=\{x\in\R^n: f(x)\leq1\}$ has fixed Lebesgue volume.
For instance, it was proved that the form $x\in \R^n \mapsto f^\star(x)=\sum_{i=1}^n x_i^{2d}$ minimizes the sparsity-inducing $\ell_1$-norm of coefficients among all $n$-variate forms of degree $2d$ whose sublevel set has the same Lebesgue volume as $\{f^\star\leq 1\}$, the unit $L^{2d}$-ball in $\R^n$. Equivalently, by homogeneity,  $f^\star$ minimizes $\mathrm{vol}\{f\leq 1\}$, the Lebesgue volume of the sublevel set,
among all $n$-variate forms $f$ of even degree $2d$ with bounded $\ell_1$-norm.

Similarly, it was proved that the form $x\in \R^n\mapsto b_{2d,n}(x)=(\sum_{i=1}^n x_i^2)^d$, whose sublevel set $\{b_{2d,n}\leq 1\}=\mathrm{B}_n$ is the unit Euclidean ball, minimizes $\mathrm{vol}\{f\leq 1\}$ among all $n$-variate forms $f$ of degree $2d$ with bounded Bombieri norm when $d=1,2,3$ and $4$. In addition, for some values of $d$, the form $b_{2d,n}$ also minimizes $\mathrm{vol}\{f\leq 1\}$ among all $n$-variate sum of squares forms $f$ of degree $2d$ whose Gram matrix has bounded trace.

Hence, the abovementioned results from \cite{parsimony} suggest that the Euclidean ball has 
volume-minimizing properties with regard to its representation as the sublevel set of
a form of fixed even degree $d$, when considering nonnegative forms of degree $d$ with bounded Bombieri norm or sum of squares forms of degree $d$ with Gram matrix of bounded trace.

\subsection*{Contribution}
This paper shows that indeed these results for the unit Euclidean ball $\mathrm{B}_{n}$ are  true for all even degrees $d$ and not only for the special cases considered in \cite{parsimony}. In fact we prove a more general result. The unit Euclidean ball $\mathrm{B}_n$ minimizes $\mathrm{vol}\{f\leq 1\}$:

- over {\em all} nonnegative $n$-variate forms $f$ of fixed (arbitrary) even degree $d$ with bounded norm, when the norm is invariant under orthogonal changes of variables, which includes {\em Bombieri norm} as 
important special case;

- over {\em all} sum of squares $n$-variate forms $f$ of fixed (arbitrary) even degree $d$, whose Gram matrix 
has bounded norm, when the norm is invariant under conjugation by orthogonal matrices.
This includes {\em Schatten $p$-norms} and, in particular, \emph{nuclear} and \emph{Frobenius} norms.

These new volume-minimizing properties of the Euclidean ball are attached to its
representation as a sublevel set of a form and complement its intrinsic geometric properties. 

Our results admit a probabilistic interpretation. The  Gaussian-like probability measure with density $x\mapsto \exp(-\kappa\,\vert x\vert^d)$ 
minimizes an $O(n)$-invariant norm $\Vert f\Vert$ over all probability measures with density $x\mapsto \exp(-f(x))$, 
where $f$ is a nonnegative form of degree $d$.

\section{Main results}
In the following we denote by $\mathcal{F}_{d,n}$ the space of $n$-ary real forms (real homogeneous polynomials) of degree $d$. 
For any form $f\in \mathcal{F}_{d,n}$ let $\{f\leq 1\}=\{x\in \R^n: f(x)\leq 1\}$ be its sublevel set at level one and let $v(f)$ denote the Lebesgue volume of $\{f\leq 1\}$,
\begin{align}
  v(f)= \mathrm{vol}\{f\leq 1\}.
\end{align}
If for $f\in \mathcal{F}_{d,n}$ the volume $v(f)$ of the sublevel set is finite, then $f$ is necessarily \emph{nonnegative}, that is, $f(x)\geq 0$ for all $x\in \R^n$. In particular, the degree $d$ must be even which we implicitly assume in the sequel.

The \emph{volume function} $v: \mathcal{F}_{d,n}\rightarrow \R_{\geq 0}\cup\{+\infty\}$ is lower-semicontinuous and homogeneous of degree $-n/d$. Moreover, forms $f\in \mathcal{F}_{d,n}$ with finite $v(f)$ constitute a convex subcone $\mathcal{V}_{d,n}$ of the cone of nonnegative forms in $\mathcal{F}_{d,n}$\footnote{Note that $\mathcal{V}_{d,n}$ does not contain the origin.} and the restriction $v|_{\mathcal{V}_{d,n}}: \mathcal{V}_{d,n}\rightarrow \R_{\geq 0}$ is strictly convex. We refer to \cite[Thm. $2.2$]{parsimony} for these results.

Let $\Vert\cdot \Vert: \mathcal{F}_{d,n}\rightarrow \R$ be any norm and consider the following convex optimization problem
\begin{align}\label{opt}
\mathbf{P_{\Vert \cdot\Vert}}:\quad\mathrm{opt}_{\Vert \cdot\Vert} = \inf\{v(f):\ \Vert f\Vert\leq 1,\ f\in \mathcal{F}_{d,n}\}.
\end{align}
\begin{remark}
Note that $\mathbf{P_{\Vert\cdot\Vert}}$ is the problem of minimization of the volume of the sublevel set $\{f\leq 1\}$ of a form $f\in \mathcal{F}_{d,n}$ over the unit ball in $\mathcal{F}_{d,n}$ defined by the norm $\Vert\cdot\Vert$. 
\end{remark}
Consider the following standard action of the group $O(n)=\{\rho\in \R^{n\times n}: \rho\rho^t=\mathrm{id}\}$ of orthogonal transformations on forms $\mathcal{F}_{d,n}$:
\begin{align}\label{action}
  \rho\in O(n), f\in \mathcal{F}_{d,n}\ \mapsto\ \rho^*f\in \mathcal{F}_{d,n},\ \rho^*f(x)=f(\rho^{-1}x).
\end{align}
A norm $\Vert\cdot\Vert : \mathcal{F}_{d,n}\rightarrow \R$ is \emph{$O(n)$-invariant} if $\Vert \rho^*f\Vert = \Vert f\Vert$ for all $\rho\in O(n)$ and $f\in \mathcal{F}_{d,n}$.

In the following theorem we show that $\mathbf{P}_{\Vert\cdot\Vert}$ has a unique optimal solution and we find it explicitly in the case of an $O(n)$-invariant norm.
\begin{theorem}\label{Th1}
  Let $d$ be even and $\Vert\cdot\Vert:\mathcal{F}_{d,n}\rightarrow \R$ be a norm. Then
  \begin{itemize}
  \item the convex optimization problem $\mathbf{P_{\Vert\cdot\Vert}}$ has a unique optimal solution $f^\star\in \mathcal{V}_{d,n}$.
  \item   If the norm $\Vert\cdot\Vert$ is $O(n)$-invariant, then $f^\star= b_{d,n}/\Vert b_{d,n}\Vert$, where\\ $b_{d,n}(x)=\vert x\vert^{d} = (x_1^2+\dots+x_n^2)^{d/2}$, and $\mathrm{opt}_{\Vert \cdot\Vert} = \Vert b_{d,n}\Vert^{n/d}v(b_{d,n})$.
    \end{itemize}
 \end{theorem}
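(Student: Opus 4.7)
My plan is to establish the first bullet via a standard compactness plus strict convexity argument, and then deduce the second bullet by exploiting the $O(n)$-invariance of both the objective $v$ and the feasibility constraint, combined with the uniqueness already shown.

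For existence, the feasible set $\{f : \Vert f\Vert\leq 1\}$ is compact since $\mathcal{F}_{d,n}$ is finite-dimensional, and $v$ is lower semicontinuous (as cited from \cite{parsimony}), so the infimum is attained. The value $\mathrm{opt}_{\Vert\cdot\Vert}$ is finite because, for instance, a small enough positive multiple of $b_{d,n}$ is feasible and has finite volume; hence every minimizer lies in $\mathcal{V}_{d,n}$. Uniqueness then follows because the intersection $\{\Vert f\Vert\leq 1\}\cap \mathcal{V}_{d,n}$ is convex and the restriction of $v$ to $\mathcal{V}_{d,n}$ is strictly convex, so at most one minimizer can exist.

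For the second bullet, I would first note that Lebesgue measure is $O(n)$-invariant and that $\{\rho^*f\leq 1\}=\rho(\{f\leq 1\})$ for every $\rho\in O(n)$, so $v$ is $O(n)$-invariant as a function on $\mathcal{F}_{d,n}$. Combined with $O(n)$-invariance of $\Vert\cdot\Vert$, this shows that if $f^\star$ is optimal, so is $\rho^*f^\star$ for every $\rho\in O(n)$; by the uniqueness from the first bullet, $\rho^*f^\star=f^\star$ for all $\rho\in O(n)$. By classical invariant theory, the ring of $O(n)$-invariant polynomials on $\R^n$ is $\R[x_1^2+\dots+x_n^2]$, so an $O(n)$-invariant form of even degree $d$ must be a scalar multiple of $b_{d,n}$. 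Writing $f^\star=c\,b_{d,n}$ with $c>0$ and using that $v$ is homogeneous of degree $-n/d<0$, the minimum over the unit ball of $\Vert\cdot\Vert$ must be attained on its boundary, hence $\Vert f^\star\Vert=1$ and $c=1/\Vert b_{d,n}\Vert$. Homogeneity then gives $\mathrm{opt}_{\Vert\cdot\Vert}=v(b_{d,n}/\Vert b_{d,n}\Vert)=\Vert b_{d,n}\Vert^{n/d}\,v(b_{d,n})$.

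The step requiring most care is the uniqueness in the first bullet, since the symmetrization argument in the second bullet only tells us that the set of minimizers is $O(n)$-invariant; without uniqueness one could not conclude that $f^\star$ itself is $O(n)$-invariant. Once uniqueness and the strict convexity of $v|_{\mathcal{V}_{d,n}}$ are in hand, the remaining ingredients (invariance of Lebesgue measure, description of $O(n)$-invariants, and the scaling computation) are routine.
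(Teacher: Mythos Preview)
Your proposal is correct and follows essentially the same route as the paper: compactness of the unit ball plus lower semicontinuity of $v$ for existence, strict convexity of $v|_{\mathcal{V}_{d,n}}$ for uniqueness, and then $O(n)$-invariance of both $v$ and $\Vert\cdot\Vert$ together with uniqueness to force $f^\star$ to be $O(n)$-invariant and hence a multiple of $b_{d,n}$. The only cosmetic difference is that you invoke classical invariant theory to identify $O(n)$-invariant forms, whereas the paper proves this directly via an elementary one-line lemma; both arguments are equally valid.
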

 The first claim follows from the fact that the volume function
 is lower-semicontinuous and strictly convex, see \cite[Section $7.2$]{parsimony}. The $O(n)$-invariance of the norm and of the volume function combined with the uniqueness of the optimal solution imply the second claim. We refer to Section \ref{sec:main}, where Theorem \ref{Th1} is proved in detail.  
 
The sublevel set of $b_{d,n}$ is the unit Euclidean ball $\mathrm{B}_n=\{\vert x\vert \leq 1\}$, it does not depend on $d$ and its volume equals
\begin{align}\label{v(b)}
v(b_{d,n})=\mathrm{vol}(\mathrm{B}_n)=  \frac{\sqrt{\pi}^n}{\Gamma\left(\frac{n}{2}+1\right)}.
\end{align}
\begin{remark}
\cref{Th1} implies that the Euclidean ball in $\R^n$ of radius $\Vert b_{d,n}\Vert^{1/d}$ has smallest volume among sublevel sets of forms in the unit ball $\{f\in \mathcal{F}_{d,n}: \Vert f\Vert\leq 1\}$ in $\mathcal{F}_{d,n}$ defined by an $O(n)$-invariant norm. 
\end{remark}
Observe that if the norm $\Vert\cdot \Vert$ is not $O(n)$-invariant, then $f^\star\neq b_{d,n}/\Vert b_{d,n}\Vert$ in general. For example, when $\Vert \cdot \Vert$ is the $\ell_1$-norm of coefficients of a form written in the basis $\{x^\alpha\}_{\vert\alpha\vert=d}$ of monomials, then \cite[Thm. $3.2$]{parsimony} implies that $f^{\star}=\frac{1}{\sqrt{n}}\left( x_1^d+\cdots+x_n^d\right)$, a form not proportional to $b_{d,n}$ for $d>2$.

\begin{remark}
An interesting extension of Theorem \ref{Th1} pointed out by a referee is to consider the general setting of continuous $\lambda$-homogeneous functions on $\R^n$, where $\lambda$ is a positive real number. To establish such a generalization one need to investigate (i) continuity properties of the volume function on an appropriate infinite-dimensional (reflexive) Banach space that contains such functions,  and (ii) whether homogenity is preserved when passing to weak-limits of sequences. We leave this question for future research.
\end{remark}

Now we compute the optimal value of $\mathbf{P_{\Vert\cdot\Vert}}$ for some relevant $O(n)$-invariant norms, in view of \cref{Th1} and \cref{v(b)} this task reduces to computing $\Vert b_{d,n}\Vert$.

\subsection{Bombieri norm}
Recall first that any $f\in \mathcal{F}_{d,n}$ can be written in \emph{the basis of rescaled monomials}, 
\begin{align}\label{expansion}
f(x)=\sum_{|\alpha|=d}f_{\alpha}\sqrt{{d\choose \alpha}} x^\alpha,\ x\in \R^n,
\end{align}
where ${d\choose \alpha}=\frac{d!}{\alpha_1!\dots \alpha_n!}$ is the multinomial coefficient. \emph{The Bombieri norm of $f$} is defined as
\begin{align}\label{Bomb}
  \Vert f\Vert_B^2 = \sum_{|\alpha|=d} f_{\alpha}^2.
\end{align}
Under different names this norm appears in real algebraic geometry \cite{Reznick1992},
in perturbation theory of roots of univariate polynomials \cite{TBS2017}, in the truncated moment problem \cite{Sch2017}, in the study of random polynomials \cite{ShSm1993, FLL2015}, in the theory of symmetric tensor decompositions \cite{BCMT2010} and in many others branches of mathematics. It is well-known that Bombieri norm is $O(n)$-invariant (see, e.g., \cite[Sec. $2.1$]{AKU2019}).
\begin{corollary}[Bombieri norm]\label{cor:Bomb}
For any $f\in \mathcal{F}_{d,n}$ with $\Vert f\Vert_B\leq 1$
  \begin{align}\label{opt:Bomb}
    v\left(\frac{b_{d,n}}{\Vert b_{d,n}\Vert_B}\right) =
\left(    \prod_{i=0}^{d/2-1} \frac{2i+n}{2i+1}
    \right)^{n/2d}\frac{\sqrt{\pi}^n}{\Gamma\left(\frac{n}{2}+1\right)}\leq v(f)
\end{align}
and equality holds  if and only if $f=b_{d,n}/\Vert b_{d,n}\Vert_B$.
\end{corollary}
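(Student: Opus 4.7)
\medskip

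\noindent\textbf{Proof plan.} The strategy is to recognize that the corollary is a direct application of \cref{Th1} to the Bombieri norm; the only non-trivial ingredient is the explicit evaluation of $\Vert b_{d,n}\Vert_B$.

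First, I would recall that the Bombieri norm is $O(n)$-invariant (as noted right before the statement), so \cref{Th1} applies and yields that $f^\star = b_{d,n}/\Vert b_{d,n}\Vert_B$ is the unique minimizer of $v$ over $\{f\in\mathcal{F}_{d,n}:\Vert f\Vert_B\le 1\}$, with $v(f^\star)=\Vert b_{d,n}\Vert_B^{n/d}\,v(b_{d,n})$. The inequality in \eqref{opt:Bomb}, together with the equality case, then follows at once from this uniqueness, and the Euclidean-ball volume in \eqref{v(b)} supplies the factor $\sqrt{\pi}^n/\Gamma(n/2+1)$. So everything reduces to computing $\Vert b_{d,n}\Vert_B$ in closed form and matching it to the factor $\bigl(\prod_{i=0}^{d/2-1}(2i+n)/(2i+1)\bigr)^{n/(2d)}$ appearing in \eqref{opt:Bomb}.

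To compute the Bombieri norm of $b_{d,n}$, write $d=2k$ and expand via the multinomial theorem:
\[
b_{d,n}(x)=\Bigl(\sum_{i=1}^n x_i^2\Bigr)^{k}=\sum_{|\beta|=k}\binom{k}{\beta}x^{2\beta}.
\]
Matching this against the rescaled-monomial expansion \eqref{expansion} gives $f_{2\beta}=\binom{k}{\beta}/\sqrt{\binom{2k}{2\beta}}$ while all odd-exponent coefficients vanish, so by \eqref{Bomb}
\[
\Vert b_{d,n}\Vert_B^2=\sum_{|\beta|=k}\frac{\binom{k}{\beta}^2}{\binom{2k}{2\beta}}=\frac{1}{\binom{2k}{k}}\sum_{|\beta|=k}\prod_{i=1}^n\binom{2\beta_i}{\beta_i},
\]
the second equality being a direct factorial manipulation. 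The inner sum is the coefficient of $z^k$ in the $n$-th power of $\sum_{m\ge 0}\binom{2m}{m}z^m=(1-4z)^{-1/2}$, namely $[z^k](1-4z)^{-n/2}$, which by the negative binomial series equals $\frac{2^k}{k!}\prod_{i=0}^{k-1}(n+2i)$. Combining this with the identity $(2k)!=2^k k!\prod_{i=0}^{k-1}(2i+1)$ collapses everything to
\[
\Vert b_{d,n}\Vert_B^2=\prod_{i=0}^{d/2-1}\frac{2i+n}{2i+1},
\]
which, raised to the power $n/(2d)$ and multiplied by $v(b_{d,n})$ from \eqref{v(b)}, delivers exactly the middle term of \eqref{opt:Bomb}.

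The only mildly delicate step is the generating-function evaluation of $\sum_{|\beta|=k}\prod_i\binom{2\beta_i}{\beta_i}$; everything else is bookkeeping. An alternative, slightly slicker route would be to integrate $b_{d,n}^2$ against a Gaussian measure on $\R^n$, using the fact that the Bombieri inner product is proportional to $\langle f,g\rangle\mapsto \int f(\partial)\,g$ (the apolar pairing), which reduces $\Vert b_{d,n}\Vert_B^2$ to computing $\Delta^k(|x|^{2k})$ at the origin; this avoids the multi-index sum but requires invoking an external identification of the Bombieri inner product. Either way, the combinatorial payoff is the same.
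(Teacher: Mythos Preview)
Your proposal is correct and follows the same overall structure as the paper's proof: both reduce the corollary to \cref{Th1}, the volume formula \eqref{v(b)}, and the explicit evaluation of $\Vert b_{d,n}\Vert_B$. The only difference is that the paper simply quotes the value $\Vert b_{d,n}\Vert_B^2=\prod_{i=0}^{d/2-1}\frac{2i+n}{2i+1}$ from Reznick \cite[(8.19)]{Reznick1992}, whereas you derive it from scratch via the generating-function identity $\sum_{m\ge 0}\binom{2m}{m}z^m=(1-4z)^{-1/2}$; your computation is correct and makes the argument self-contained, at the cost of a few extra lines.
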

The second author of the present paper conjectured in \cite[p. $249$]{parsimony} the result of \cref{cor:Bomb} and proved it for any $n$ and $d=2, 4, 6$ and $8$.

\subsection{$L^p$-norms on $\mathbb{S}^{n-1}$} The following class of norms plays a fundamental role in the study of boundary value problems for partial differential equations (see, for example, \cite{Agmon1959}). Let $p\geq 1$ and define $L^p$-norm on the unit sphere
$\mathbb{S}^{n-1}=\{x\in\R^n:\vert x\vert=1\}$ as
  \begin{align}\label{L^p}
    \Vert f\Vert_{L^p(\mathbb{S}^{n-1})} = \left(\int_{\mathbb{S}^{n-1}} |f(x)|^p\,d\mathbb{S}^{n-1}\right)^{1/p},\ f\in \mathcal{F}_{d,n},  
  \end{align}
  where $d\mathbb{S}^{n-1}$ is the Riemannian volume density on $\mathbb{S}^{n-1}$. The integral in \cref{L^p} is convergent for any $f\in \mathcal{F}_{d,n}$ and the norms $\Vert\cdot\Vert_{L^p(\mathbb{S}^{n-1})}$, $p\geq 1$, are obviously $O(n)$-invariant.
  \begin{corollary}[$L^p$-norm on $\mathbb{S}^{n-1}$]\label{cor:L^p}
    For any $f\in \mathcal{F}_{d,n}$ with $\Vert f\Vert_{L^p(\mathbb{S}^{n-1})}\leq 1$
    \begin{align}\label{opt:L^p}
v\left(\frac{b_{d,n}}{\Vert b_{d,n}\Vert_{L^p(\mathbb{S}^{n-1})}}\right) = \left(\frac{2\sqrt{\pi}^{n}}{\Gamma\left(\frac{n}{2}\right)}\right)^{n/dp}\frac{\sqrt{\pi}^n}{\Gamma\left(\frac{n}{2}+1\right)}\leq v(f)
    \end{align}
    and equality holds  if and only if $f=b_{d,n}/\Vert b_{d,n}\Vert_{L^p(\mathbb{S}^{n-1})}$.
  \end{corollary}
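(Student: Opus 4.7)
The corollary is a direct specialization of Theorem \ref{Th1} to the norm $\Vert\cdot\Vert_{L^p(\mathbb{S}^{n-1})}$. Since this norm is already declared (and obviously is) $O(n)$-invariant---the group $O(n)$ acts by isometries on $\mathbb{S}^{n-1}$ with its Riemannian volume density---Theorem \ref{Th1} applies verbatim and says that the unique minimizer of $v$ over the unit ball of this norm is $f^\star = b_{d,n}/\Vert b_{d,n}\Vert_{L^p(\mathbb{S}^{n-1})}$ and that $\mathrm{opt}_{\Vert\cdot\Vert_{L^p(\mathbb{S}^{n-1})}} = \Vert b_{d,n}\Vert_{L^p(\mathbb{S}^{n-1})}^{n/d}\,v(b_{d,n})$. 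Thus my plan reduces to computing the single quantity $\Vert b_{d,n}\Vert_{L^p(\mathbb{S}^{n-1})}$ and substituting it, together with the known value \cref{v(b)} of $v(b_{d,n})$, into this formula.

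The key observation, and the one that makes the proof almost immediate, is that $b_{d,n}(x) = \vert x\vert^d$ is radial and identically equal to $1$ on the unit sphere. Therefore
\begin{align*}
  \Vert b_{d,n}\Vert_{L^p(\mathbb{S}^{n-1})}^p \;=\; \int_{\mathbb{S}^{n-1}} 1\, d\mathbb{S}^{n-1} \;=\; \mathrm{vol}(\mathbb{S}^{n-1}) \;=\; \frac{2\sqrt{\pi}^n}{\Gamma(n/2)},
\end{align*}
so that $\Vert b_{d,n}\Vert_{L^p(\mathbb{S}^{n-1})} = \bigl(2\sqrt{\pi}^n/\Gamma(n/2)\bigr)^{1/p}$. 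Combining this with the $(-n/d)$-homogeneity of $v$ (giving $v(f^\star) = \Vert b_{d,n}\Vert_{L^p(\mathbb{S}^{n-1})}^{n/d}\, v(b_{d,n})$) and with \cref{v(b)} produces exactly the expression on the left-hand side of \cref{opt:L^p}.

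Finally, the inequality $v(f^\star)\le v(f)$ for every $f$ with $\Vert f\Vert_{L^p(\mathbb{S}^{n-1})}\le 1$ and the fact that equality characterizes $f=f^\star$ are just the two assertions of Theorem \ref{Th1} transcribed to this norm; no further argument is needed. There is no real obstacle in this proof---the only computational step is recognizing that $b_{d,n}\equiv 1$ on $\mathbb{S}^{n-1}$, which trivializes the $L^p$ integral and, incidentally, explains why the resulting expression depends on $p$ only through the exponent $n/(dp)$.
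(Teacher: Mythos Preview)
Your proof is correct and follows essentially the same approach as the paper: both apply Theorem~\ref{Th1} to the $O(n)$-invariant $L^p(\mathbb{S}^{n-1})$-norm and reduce the computation to evaluating $\Vert b_{d,n}\Vert_{L^p(\mathbb{S}^{n-1})}$ via the observation that $b_{d,n}\equiv 1$ on $\mathbb{S}^{n-1}$, so the integral is just $\mathrm{vol}(\mathbb{S}^{n-1})=2\sqrt{\pi}^{\,n}/\Gamma(n/2)$.
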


\subsection{Uniform norm on $\mathbb{S}^{n-1}$}
As the limiting case of $L^p(\mathbb{S}^{n-1})$-norms when $p\rightarrow +\infty$ one obtains \emph{the uniform norm} on the unit sphere $\mathbb{S}^{n-1}$,
\begin{align}\label{uniform}
  \Vert f\Vert_{L^\infty(\mathbb{S}^{n-1})} = \max\limits_{x\in \mathbb{S}^{n-1}} |f(x)|.
\end{align}
\begin{corollary}[Uniform norm on $\mathbb{S}^{n-1}$]\label{cor:uniform}
For any $f\in \mathcal{F}_{d,n}$ with $\Vert f\Vert_{L^\infty(\mathbb{S}^{n-1})}\leq 1$
  \begin{align}\label{opt:uniform}
     v\left(\frac{b_{d,n}}{\Vert b_{d,n}\Vert_{L^\infty(\mathbb{S}^{n-1})}}\right) = \frac{\sqrt{\pi}^n}{\Gamma\left(\frac{n}{2}+1\right)}\leq v(f)
   \end{align}
   and equality holds  if and only if $f=b_{d,n}$.
  \end{corollary}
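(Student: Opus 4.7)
The plan is to apply \cref{Th1} directly to the norm $\Vert\cdot\Vert_{L^\infty(\mathbb{S}^{n-1})}$ and then evaluate the resulting expression, since unlike the Bombieri and $L^p$ cases the required computation collapses trivially. There is no substantive obstacle here; the proof is essentially a check of hypotheses followed by a one-line evaluation.

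First I would verify that $\Vert\cdot\Vert_{L^\infty(\mathbb{S}^{n-1})}$ is an $O(n)$-invariant norm on $\mathcal{F}_{d,n}$. This is immediate: for any $\rho\in O(n)$ the map $x\mapsto \rho^{-1}x$ is a bijection of $\mathbb{S}^{n-1}$ onto itself, so
\begin{align*}
\Vert \rho^*f\Vert_{L^\infty(\mathbb{S}^{n-1})}=\max_{x\in \mathbb{S}^{n-1}}|f(\rho^{-1}x)|=\max_{y\in \mathbb{S}^{n-1}}|f(y)|=\Vert f\Vert_{L^\infty(\mathbb{S}^{n-1})}.
\end{align*}

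Next I would compute $\Vert b_{d,n}\Vert_{L^\infty(\mathbb{S}^{n-1})}$. Since $b_{d,n}(x)=|x|^d\equiv 1$ on $\mathbb{S}^{n-1}$, this norm equals $1$. Hence the normalized minimizer furnished by \cref{Th1} is simply
\begin{align*}
f^\star=\frac{b_{d,n}}{\Vert b_{d,n}\Vert_{L^\infty(\mathbb{S}^{n-1})}}=b_{d,n}.
\end{align*}

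Finally, plugging into \cref{Th1} and using formula \cref{v(b)}, the optimal value equals
\begin{align*}
\mathrm{opt}_{\Vert\cdot\Vert_{L^\infty(\mathbb{S}^{n-1})}}=\Vert b_{d,n}\Vert_{L^\infty(\mathbb{S}^{n-1})}^{n/d}\,v(b_{d,n})=v(b_{d,n})=\frac{\sqrt{\pi}^n}{\Gamma\!\left(\tfrac{n}{2}+1\right)},
\end{align*}
which establishes the inequality \cref{opt:uniform}. The equality case (uniqueness) follows from the uniqueness part of \cref{Th1}: any $f\in \mathcal{F}_{d,n}$ with $\Vert f\Vert_{L^\infty(\mathbb{S}^{n-1})}\leq 1$ achieving $v(f)=v(b_{d,n})$ must coincide with $f^\star=b_{d,n}$.
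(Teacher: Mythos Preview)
Your proof is correct and follows exactly the paper's approach: apply \cref{Th1}, note that $\Vert b_{d,n}\Vert_{L^\infty(\mathbb{S}^{n-1})}=\max_{x\in\mathbb{S}^{n-1}}|x|^d=1$, and invoke \eqref{v(b)}. The paper compresses this into a single sentence, while you additionally spell out the (immediate) $O(n)$-invariance of the uniform norm, but the argument is the same.
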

    Note that \cref{opt:uniform} can be considered as the limiting case of \cref{opt:L^p} when $p\rightarrow +\infty$.

\subsection{Nuclear norm} 
    \emph{Nuclear norm} appears in the study of tensor decompositions \cite{FL2016} and in the theory of rank-one approximations of tensors \cite{LNSU2018, AKU2019}. For $f\in \mathcal{F}_{d,n}$ it is defined as
    \begin{align}\label{nucl}
      \Vert f\Vert_* = \inf\left\{\sum_{k=1}^r |\lambda_k|:\ f(x)=\sum\limits_{k=1}^r \lambda_k(y^k\cdot x)^d,\ \lambda_k\in \R,\ y^k\in \mathbb{S}^{n-1}\right\},
    \end{align}
    where $(y\cdot x) = y_1x_1+\dots+y_nx_n$ denotes the dot product of two vectors in $\R^n$.
    \begin{corollary}[Nuclear norm]\label{cor:nucl}
      For any $f\in \mathcal{F}_{d,n}$ with $\Vert f\Vert_*\leq 1$
      \begin{align}
        v\left(\frac{b_{d,n}}{\Vert b_{d,n}\Vert_*}\right) = \left(    \prod_{i=0}^{d/2-1} \frac{2i+n}{2i+1}\right)^{n/d}\frac{\sqrt{\pi}^n}{\Gamma\left(\frac{n}{2}+1\right)}\leq v(f)
      \end{align}
      and equality holds  if and only if $f=b_{d,n}/\Vert b_{d,n}\Vert_*$.
    \end{corollary}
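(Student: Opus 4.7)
The plan is to apply \cref{Th1} once $O(n)$-invariance of $\Vert\cdot\Vert_*$ is verified, and then to compute $\Vert b_{d,n}\Vert_*$ from a single integral representation. For the invariance: given a decomposition $f=\sum_{k=1}^r\lambda_k(y^k\cdot x)^d$ with $y^k\in\mathbb{S}^{n-1}$ and $\rho\in O(n)$, using $\rho^{-1}=\rho^t$ one obtains $(\rho^*f)(x)=\sum_k\lambda_k(\rho y^k\cdot x)^d$ with $\rho y^k\in\mathbb{S}^{n-1}$; this sets up a bijection between decompositions of $f$ and of $\rho^*f$ preserving $\sum_k|\lambda_k|$, so $\Vert\rho^*f\Vert_*=\Vert f\Vert_*$. \cref{Th1} then gives $\mathrm{opt}_{\Vert\cdot\Vert_*}=\Vert b_{d,n}\Vert_*^{n/d}v(b_{d,n})$, so it remains to show $\Vert b_{d,n}\Vert_*=\prod_{i=0}^{d/2-1}(n+2i)/(2i+1)$.

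The main analytical tool is the $O(n)$-invariant integral representation
\[
b_{d,n}(x)=\frac{1}{c}\int_{\mathbb{S}^{n-1}}(y\cdot x)^d\,d\mathbb{S}^{n-1}(y),\qquad c=\int_{\mathbb{S}^{n-1}}y_1^d\,d\mathbb{S}^{n-1}(y),
\]
valid because the right-hand side is an $O(n)$-invariant form of degree $d$ in $x$, hence proportional to $|x|^d$, with the constant fixed by evaluation at $x=e_1$. For the upper bound on $\Vert b_{d,n}\Vert_*$, I would approximate the integral by Riemann sums $\sum_k(\mathrm{vol}(A_k)/c)(y^k\cdot x)^d$ with $y^k\in A_k\subset\mathbb{S}^{n-1}$: these converge to $b_{d,n}$ in the finite-dimensional space $\mathcal{F}_{d,n}$, and continuity of the nuclear norm on that space gives $\Vert b_{d,n}\Vert_*\leq\mathrm{vol}(\mathbb{S}^{n-1})/c$. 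A short computation with the $\Gamma$-function then identifies this ratio with the claimed product.

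For the matching lower bound, I would take any decomposition $b_{d,n}=\sum_k\lambda_k(y^k\cdot x)^d$ with $y^k\in\mathbb{S}^{n-1}$ and integrate both sides over $\mathbb{S}^{n-1}$ in the variable $x$. The left-hand side gives $\int_{\mathbb{S}^{n-1}}|x|^d\,d\mathbb{S}^{n-1}(x)=\mathrm{vol}(\mathbb{S}^{n-1})$, while Fubini combined with the $O(n)$-invariant identity $\int_{\mathbb{S}^{n-1}}(y^k\cdot x)^d\,d\mathbb{S}^{n-1}(x)=c$ (valid since $|y^k|=1$, by the same symmetry argument used above) gives $c\sum_k\lambda_k$ on the right. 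Hence $\sum_k\lambda_k=\mathrm{vol}(\mathbb{S}^{n-1})/c$ for \emph{every} admissible decomposition, and consequently $\sum_k|\lambda_k|\geq\left|\sum_k\lambda_k\right|=\mathrm{vol}(\mathbb{S}^{n-1})/c$. Taking the infimum yields $\Vert b_{d,n}\Vert_*\geq\mathrm{vol}(\mathbb{S}^{n-1})/c$, matching the upper bound.

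The main obstacle is obtaining the \emph{sharp} lower bound: evaluating the decomposition at a single point of $\mathbb{S}^{n-1}$ only yields the weak bound $\sum_k|\lambda_k|\geq 1$, so the crucial observation is that integration of the decomposition over the entire sphere pins down the signed sum $\sum_k\lambda_k$ exactly, as a universal constant depending only on $n$ and $d$. The remaining reduction of $\mathrm{vol}(\mathbb{S}^{n-1})/c$ to $\prod_{i=0}^{d/2-1}(n+2i)/(2i+1)$ is a routine application of standard identities for spherical moments and the $\Gamma$-function.
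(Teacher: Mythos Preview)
Your argument is correct and genuinely different from the paper's. Both proofs reduce via \cref{Th1} to computing $\Vert b_{d,n}\Vert_*$, and both establish the lower bound by applying a single linear functional to an arbitrary decomposition $b_{d,n}=\sum_k\lambda_k(y^k\cdot x)^d$: you integrate over $\mathbb{S}^{n-1}$, the paper takes the Bombieri pairing with $b_{d,n}$ and uses the reproducing property $\langle (y\cdot\bullet)^d,g\rangle_B=g(y)$; these are the same functional up to the constant $c$, since both send $(y\cdot\bullet)^d$ to $|y|^d$. The real divergence is in the upper bound. The paper invokes Hilbert's theorem for a finite decomposition of $b_{d,n}$ with positive coefficients and then cites \cite[Example~1.1]{Nie2017} to conclude that $\Vert b_{d,n}\Vert_*$ equals the sum of those coefficients, after which the Bombieri calculation identifies that sum as $\Vert b_{d,n}\Vert_B^2$. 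Your Riemann-sum argument is more elementary and entirely self-contained, at the cost of a small limiting step; it also yields the value $\mathrm{vol}(\mathbb{S}^{n-1})/c$ directly rather than passing through $\Vert b_{d,n}\Vert_B^2$. The paper's route is shorter and exhibits the pleasant identity $\Vert b_{d,n}\Vert_*=\Vert b_{d,n}\Vert_B^2$, while yours avoids two nontrivial external references.
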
    
    A form $f\in \mathcal{F}_{d,n}$ of even degree $d$ is called \emph{a sum of squares} if $f=s_1^2+\dots+s_r^2$ for some forms $s_1,\dots,s_r\in \mathcal{F}_{d/2,n}$ of  degree $d/2$. Any sum of squares form is non-negative. Fix a total order $\leq$ on the set $\left\{\sqrt{{d/2\choose \alpha}}x^\alpha: \vert\alpha\vert=d/2\right\}$ of rescaled monomials of degree $d/2$ (e.g., the lexicographic order) and denote by $N = {d/2+n-1\choose n-1}$ the dimension of $\mathcal{F}_{d/2,n}$. Then, $f\in \mathcal{F}_{d,n}$ is a sum of squares if and only if there exists a positive semidefinite real symmetric matrix $G\in \mathcal{S}_N$, called \emph{Gram matrix}, satisfying
    \begin{align}\label{Gram}
      f(x) = m_{d/2}(x)^t  G\, m_{d/2}(x),\ x\in \R^n,
    \end{align}
    where $m_{d/2}(x)$ denotes the $N$-dimensional column-vector of rescaled monomials $\sqrt{{d/2\choose\alpha}}x^\alpha$, $\vert \alpha\vert=d/2$, ordered with respect to $\leq$ (see \cite[$\S 2$]{CLR1994} and \cref{lem:Gram}). Note that the cone of sums of squares in $\mathcal{F}_{d,n}$ is the image of the closed convex cone $\mathcal{PSD}_N\subset \mathcal{S}_N$  of positive semidefinite matrices under linear map \eqref{Gram}. 
    
    Fix a norm $\Vert \cdot\Vert$ on the space $\mathcal{S}_N$ of real symmetric $N\times N$ matrices and consider the following optimization problem: 
\begin{align}\label{SOS}
\mathbf{P}_{\Vert\cdot\Vert}^{\mathrm{sos}}:\quad\mathrm{opt}^{\mathrm{sos}}_{\Vert\cdot\Vert} = \inf\{v(f):\ f=m_{d/2}(x)^t G\, m_{d/2}(x),\ G\in \mathcal{PSD}_N,\ \Vert G\Vert \leq 1\}.
\end{align}
\begin{remark}
  Note that $\mathbf{P}^{\mathrm{sos}}_{\Vert\cdot\Vert}$ is the problem of minimization of the volume of the sublevel set $\{f\leq 1\}$ of a sum of squares $f=m_{d/2}(x)^tG\,m_{d/2}(x)$ with Gram matrix $G$ from the unit ball in $\mathcal{S}_N$ defined by the norm $\Vert\cdot\Vert$.
\end{remark}
A norm $\Vert \cdot\Vert: \mathcal{S}_N \rightarrow \R$ is said to be \emph{$O(N)$-invariant} if $\Vert R^tGR\Vert =\Vert G\Vert$ for all $R\in O(N)$ and $G\in \mathcal{S}_N$.
We prove that problem $\mathbf{P}_{\Vert\cdot\Vert}^\mathrm{sos}$ has a unique optimal solution, which, in the case of an $O(N)$-invariant norm, is proportional to $b_{d,n}$.
\begin{theorem}\label{Th2}
  Let $d$ be even and $\Vert \cdot\Vert:\mathcal{S}_N \rightarrow\R$ be a norm. Then
  \begin{itemize}
  \item $\mathbf{P}_{\Vert \cdot \Vert}^{\mathrm{sos}}$ is a convex optimization problem with a unique optimal solution $f^\star_{\mathrm{sos}}$.
\item If norm $\Vert \cdot\Vert$ is $O(N)$-invariant, then $f^\star_{\mathrm{sos}} = b_{d,n}/\Vert \mathrm{id}_N\Vert$, where $\mathrm{id}_N\in \mathcal{S}_N$ is the identity matrix, and $\mathrm{opt}_{\Vert\cdot\Vert}^{\mathrm{sos}} = \Vert \mathrm{id}_N\Vert^{n/d}v(b_{d,n})$.    
  \end{itemize}
\end{theorem}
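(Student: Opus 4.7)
The plan is to parallel the proof of \cref{Th1}, working this time with the matrix variable $G$ and using the intertwining between the $O(n)$-action on $\mathcal{F}_{d/2,n}$ and a conjugation action on $\mathcal{S}_N$. For the first bullet I observe that the feasible set $F := \{G \in \mathcal{PSD}_N : \Vert G\Vert \leq 1\}$ is compact convex in $\mathcal{S}_N$, and since $\phi: G \mapsto m_{d/2}(x)^t G m_{d/2}(x)$ is linear and continuous, its image $K := \phi(F)$ is a compact convex subset of the sums-of-squares cone in $\mathcal{F}_{d,n}$. The multinomial expansion $\phi(\mathrm{id}_N) = \sum_{|\alpha|=d/2}\binom{d/2}{\alpha}x^{2\alpha} = (x_1^2+\cdots+x_n^2)^{d/2} = b_{d,n}$ shows $K \cap \mathcal{V}_{d,n} \neq \emptyset$, and together with the lower-semicontinuity of $v$ and its strict convexity on $\mathcal{V}_{d,n}$ (\cite[Thm.~$2.2$]{parsimony}) this yields a unique minimizer $f^\star_{\mathrm{sos}} \in K \cap \mathcal{V}_{d,n}$.

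For the second bullet, I would first make the $O(n)$-symmetry explicit at the matrix level. Because the rescaled monomial basis is orthonormal for the ($O(n)$-invariant) Bombieri inner product, the action $\rho \mapsto \rho^*$ of $O(n)$ on $\mathcal{F}_{d/2,n}$ is represented in this basis by an orthogonal matrix $M_\rho \in O(N)$; a short calculation from $m_{d/2}(\rho^{-1}x) = M_\rho^t m_{d/2}(x)$ then gives the intertwining identity $\rho^*\phi(G) = \phi(M_\rho G M_\rho^t)$. Since $M_\rho \in O(N)$, the $O(N)$-invariance of $\Vert\cdot\Vert$ gives $\Vert M_\rho G M_\rho^t\Vert = \Vert G\Vert$, and since conjugation preserves positive semidefiniteness, both $F$ and $K$ are $O(n)$-invariant. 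The $O(n)$-invariance of $v$, combined with the uniqueness from the previous step, forces $\rho^* f^\star_{\mathrm{sos}} = f^\star_{\mathrm{sos}}$ for all $\rho \in O(n)$. As in \cref{Th1}, the one-dimensional space of $O(n)$-invariant forms of even degree $d$ is spanned by $b_{d,n}$, so $f^\star_{\mathrm{sos}} = c\,b_{d,n}$ for some $c > 0$.

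To identify $c$, note that $\mathrm{id}_N/\Vert\mathrm{id}_N\Vert \in F$ witnesses $b_{d,n}/\Vert\mathrm{id}_N\Vert \in K$, so $c = 1/\Vert\mathrm{id}_N\Vert$ is achievable. The matching lower bound---equivalently, that $\mathrm{id}_N$ realizes $\inf\{\Vert G\Vert : G \in \mathcal{PSD}_N,\ \phi(G) = b_{d,n}\}$---is the step I expect to be the main obstacle. My approach would be to average $G$ against the Haar measure of $O(n)$: by the convexity of $\Vert\cdot\Vert$ together with its $O(N)$-invariance, $\bar G := \int_{O(n)} M_\rho G M_\rho^t\,d\rho$ is a PSD Gram matrix of the $O(n)$-invariant form $b_{d,n}$ satisfying $\Vert\bar G\Vert \leq \Vert G\Vert$. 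Because $\bar G$ commutes with every $M_\rho$, the isotypic decomposition $\mathcal{F}_{d/2,n} = \bigoplus_k W_k$ under $O(n)$ forces $\bar G = \sum_k \lambda_k P_k$ for non-negative scalars $\lambda_k$ and orthogonal projections $P_k$ onto the irreducible summands, reducing the problem to a finite-dimensional spectral inequality asserting that $\sum_k P_k = \mathrm{id}_N$ minimizes $\Vert\cdot\Vert$ among such non-negative combinations whose image under $\phi$ is $b_{d,n}$. Once this spectral comparison is established, the formula $\mathrm{opt}^{\mathrm{sos}}_{\Vert\cdot\Vert} = \Vert\mathrm{id}_N\Vert^{n/d} v(b_{d,n})$ follows from the $-n/d$-homogeneity of $v$.
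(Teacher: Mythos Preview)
Your argument for existence, uniqueness, and the conclusion $f^\star_{\mathrm{sos}}=c\,b_{d,n}$ follows the paper's proof essentially verbatim: compactness of the feasible Gram set, linearity of $\phi$, lower semicontinuity and strict convexity of $v$, the intertwining $\rho^*\phi(G)=\phi(M_\rho G M_\rho^{\,t})$ with $M_\rho\in O(N)$ coming from orthonormality of the rescaled monomial basis for the Bombieri inner product, and finally \cref{invariance}.

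Where you diverge from the paper is in trying to pin down the constant $c$. The paper at that point simply writes that $f^\star=b_{d,n}/\Vert\mathrm{id}_N\Vert$ ``as its Gram matrix satisfies $\Vert\mathrm{id}_N/\Vert\mathrm{id}_N\Vert\Vert=1$'', which establishes only that $c=1/\Vert\mathrm{id}_N\Vert$ is \emph{feasible}; it never argues that no larger multiple of $b_{d,n}$ lies in $K$. You are right to single this out as the crux, but your proposed averaging--isotypic route cannot close the gap, because for $d\ge4$ the $O(n)$-module $\mathcal{F}_{d/2,n}$ is reducible (it splits into spherical-harmonic pieces), so the averaged $\bar G$ need not be a scalar, and the ``spectral inequality'' you hope for is in fact \emph{false}. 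Concretely, take $n=2$, $d=4$, so $N=3$ and $m_2(x)=(x_1^2,\sqrt2\,x_1x_2,x_2^2)^t$. The rank-one matrix $G=\vec s\,\vec s^{\,t}$ with $\vec s=(1,0,1)^t$ is a PSD Gram matrix of $b_{4,2}=(x_1^2+x_2^2)^2$ with $\Vert G\Vert_1=\mathrm{tr}\,G=2<3=\Vert\mathrm{id}_3\Vert_1$; and the matrix $G'$ with diagonal $(1,\,2/3,\,1)$ and only nonzero off-diagonal entries $G'_{13}=G'_{31}=1/3$ is a PSD Gram matrix of $b_{4,2}$ with $\Vert G'\Vert_2=2\sqrt6/3<\sqrt3=\Vert\mathrm{id}_3\Vert_2$. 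In either case $b_{4,2}/\Vert G\Vert$ is feasible for $\mathbf{P}^{\mathrm{sos}}_{\Vert\cdot\Vert}$ and has strictly smaller volume than $b_{4,2}/\Vert\mathrm{id}_3\Vert$. Thus the lower bound you are after --- and with it the asserted identity $f^\star_{\mathrm{sos}}=b_{d,n}/\Vert\mathrm{id}_N\Vert$ --- fails for the nuclear and Frobenius norms; the paper's proof shares this gap. (The conclusion \emph{is} correct for the spectral norm: $\Vert G\Vert_\sigma\le1$ with $G\in\mathcal{PSD}_N$ gives $G\preceq\mathrm{id}_N$, hence $\phi(G)\le b_{d,n}$ pointwise, forcing $c\le1=1/\Vert\mathrm{id}_N\Vert_\sigma$.)
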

The first claim follows from convexity properties of the norm, the cone of positive semidefinite matrices and the volume function. Existence and uniqueness of an optimal solution is derived from the fact that the volume function is lower-semicontinuous and strictly convex and from the fact that the map \eqref{Gram} sending a real symmetric matrix to a real form is linear. The $O(N)$-invariance of the norm and of the volume function combined with the uniqueness of the optimal solution imply the last claim. A more detailed proof of Therem \ref{Th2} is given in Section \ref{sec:main}.

\begin{remark}
  \cref{Th2} implies that the Euclidean ball in $\R^n$ of radius $\Vert \mathrm{id}_N\Vert^{1/d}$ has smallest volume among sublevel sets of sums of squares corresponding to Gram matrices from the unit ball $\{G\in \mathcal{S}_N: \Vert G\Vert\leq 1\}$ in $\mathcal{S}_N$ defined by an $O(N)$-invariant norm.  
\end{remark}
\subsection{Schatten $p$-norms} Given a real symmetric matrix $G\in \mathcal{S}_N$ its \emph{Schatten $p$-norm}, $p\geq 1$, is defined by
\begin{align}\label{Schatten}
  \Vert G\Vert_p = \left(\sum_{i=1}^N \vert \lambda_i(G)\vert^p\right)^{1/p},
\end{align}
where $\lambda_1(G), \dots, \lambda_N(G)\in \R$ are the eigenvalues of $G$. When $p=1$ this norm is also known as \emph{nuclear norm} and if $p=2$ we recover \emph{Frobenius norm} which is classically used in the context of low-rank approximation of matrices \cite{EY1936}. Since eigenvalues do not change under conjugation by orthogonal matrices, all Schatten $p$-norms are $O(N)$-invariant.

We next compute the optimal value of problem $\mathbf{P}_{\Vert\cdot\Vert}^{\mathrm{sos}}$ for Schatten $p$-norms. 
Again, as in the above case of general nonnegative forms, by Theorem \ref{Th2} this task reduces to computing the norm of $\mathrm{id}_N\in \mathcal{S}_N$. 
\begin{corollary}[Schatten $p$-norms]\label{cor:Schatten}
Let $p\geq 1$. Then for any sum of squares form $f=m_{d/2}(x)^tG\,m_{d/2}(x)\in \mathcal{F}_{d,n}$, $G\in \mathcal{PSD}_N$, with $\Vert G\Vert_p\leq 1$
  \begin{align}
    v\left( \frac{b_{d,n}}{\Vert \mathrm{id}_N\Vert_p}\right) = N^{n/dp}\frac{\sqrt{\pi}^n}{\Gamma\left(\frac{n}{2}+1\right)} \leq v(f)
  \end{align}
  and equality holds  if and only if $f=b_{d,n}/N^{1/p}$.
\end{corollary}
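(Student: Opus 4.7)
The plan is to obtain Corollary \ref{cor:Schatten} as a direct specialization of Theorem \ref{Th2} to the family of Schatten $p$-norms on $\mathcal{S}_N$, so essentially all that remains is an explicit computation of two quantities: the Schatten $p$-norm of the identity matrix $\mathrm{id}_N \in \mathcal{S}_N$, and the volume $v(b_{d,n})$.

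First I would recall that, as noted in the paragraph introducing \cref{Schatten}, each Schatten $p$-norm $\Vert\cdot\Vert_p$ is $O(N)$-invariant because it is a function only of the eigenvalues of a symmetric matrix, and conjugation by an orthogonal matrix preserves the spectrum. Hence Theorem \ref{Th2} applies to $\mathbf{P}^{\mathrm{sos}}_{\Vert\cdot\Vert_p}$ and gives the unique optimizer
\begin{align*}
f^\star_{\mathrm{sos}} \;=\; \frac{b_{d,n}}{\Vert \mathrm{id}_N\Vert_p}, \qquad \mathrm{opt}^{\mathrm{sos}}_{\Vert\cdot\Vert_p} \;=\; \Vert \mathrm{id}_N\Vert_p^{\,n/d}\, v(b_{d,n}).
\end{align*}

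Next I would compute the two quantities. All $N$ eigenvalues of $\mathrm{id}_N$ equal $1$, so by definition \cref{Schatten} one has $\Vert \mathrm{id}_N\Vert_p = (N\cdot 1^p)^{1/p} = N^{1/p}$. For the volume, \cref{v(b)} gives $v(b_{d,n}) = \sqrt{\pi}^n/\Gamma(n/2+1)$. Combining these and using the $(-n/d)$-homogeneity of the volume function (so that $v(b_{d,n}/N^{1/p}) = N^{n/(dp)} v(b_{d,n})$) yields exactly
\begin{align*}
v\!\left(\frac{b_{d,n}}{\Vert \mathrm{id}_N\Vert_p}\right) \;=\; N^{n/(dp)}\,\frac{\sqrt{\pi}^n}{\Gamma\!\left(\frac{n}{2}+1\right)}.
\end{align*}

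Finally I would note that $b_{d,n} = (x_1^2+\cdots+x_n^2)^{d/2}$ is a sum of squares (since $d$ is even), so it does arise from a Gram matrix via \cref{Gram}, and hence it lies in the feasible set of $\mathbf{P}^{\mathrm{sos}}_{\Vert\cdot\Vert_p}$ after rescaling. Therefore Theorem \ref{Th2} yields the inequality $v(b_{d,n}/N^{1/p}) \leq v(f)$ for every feasible $f$, with equality exactly when $f = b_{d,n}/N^{1/p}$, as claimed. There is no real obstacle here beyond ensuring that the homogeneity of $v$ is applied correctly and that $\Vert\mathrm{id}_N\Vert_p = N^{1/p}$ is used in both the radius inside $v$ and in the scaling factor $N^{n/(dp)}$.
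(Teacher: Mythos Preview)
Your proposal is correct and follows exactly the approach the paper takes: it invokes Theorem~\ref{Th2} for the $O(N)$-invariant Schatten $p$-norm, computes $\Vert \mathrm{id}_N\Vert_p = N^{1/p}$ from \eqref{Schatten}, and substitutes $v(b_{d,n})$ from \eqref{v(b)}. The paper's own justification is just the one-line remark that the corollary ``immediately follows from \cref{Th2}, definition of Schatten $p$-norms \eqref{Schatten} and formula \eqref{v(b)},'' so your write-up is in fact more detailed than what appears there.
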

\begin{remark}  
In \cite{parsimony} the second author of the present paper considered an analogous problem to $\mathbf{P}_{\Vert\cdot\Vert_2}^{\mathrm{sos}}$, where $m_{d/2}(x)$ is replaced by the vector of monomials $x^\alpha$, $\vert\alpha\vert=d/2$, (without coefficients $\sqrt{{d/2\choose \alpha}}$), and proved that $b_{d,n}$ is (up to a multiple) a unique optimal solution when $d=2, 4$ and when $d\in 4\mathbb{N}$ provided that $n$ is large  \cite[Thm. $5.1$]{parsimony}.
\end{remark}

\cref{cor:Schatten} immediately follows from \cref{Th2}, definition of Schatten $p$-norms \eqref{Schatten} and formula \eqref{v(b)} for the volume of the sublevel set of $b_{d,n}$.

\subsection{Spectral norm} The 
\emph{Spectral norm} of $G\in \mathcal{S}_N$ defined by
\begin{align}
  \Vert G\Vert_{\sigma} = \max_{i=1,\dots,N}\vert\lambda_i(G)\vert
\end{align}
can be considered as the limit of Schatten $p$-norms \eqref{Schatten} as $p\rightarrow+\infty$.
\begin{corollary}[Spectral norm]
  For any sum of squares $f=m_{d/2}(x)^tG\,m_{d/2}(x)\in \mathcal{F}_{d,n}$, $G\in \mathcal{PSD}_N$, with $\Vert G\Vert_\sigma\leq 1$
  \begin{align}
      v\left( \frac{b_{d,n}}{\Vert \mathrm{id}_N\Vert_\sigma}\right) =\frac{\sqrt{\pi}^n}{\Gamma\left(\frac{n}{2}+1\right)} \leq v(f)
  \end{align}
  and equality holds  if and only if $f=b_{d,n}$.
\end{corollary}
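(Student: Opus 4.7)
The plan is to derive this corollary as a direct specialization of \cref{Th2} to the spectral norm, mirroring exactly how \cref{cor:Schatten} is obtained for the Schatten $p$-norms. So the work reduces to checking two routine ingredients: that the spectral norm $\Vert\cdot\Vert_\sigma$ on $\mathcal{S}_N$ is $O(N)$-invariant, and that $\Vert \mathrm{id}_N\Vert_\sigma$ has the expected value.

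First I would note that $\Vert R^tGR\Vert_\sigma=\Vert G\Vert_\sigma$ for every $R\in O(N)$, because conjugation by an orthogonal matrix leaves the spectrum of a real symmetric matrix invariant, and $\Vert G\Vert_\sigma=\max_i|\lambda_i(G)|$ depends only on this spectrum. Hence the spectral norm falls into the scope of the second bullet of \cref{Th2}. Next I would observe that all eigenvalues of $\mathrm{id}_N$ equal $1$, so $\Vert \mathrm{id}_N\Vert_\sigma=1$; this can also be recognized as the limit of $\Vert \mathrm{id}_N\Vert_p=N^{1/p}$ from \eqref{Schatten} as $p\to+\infty$.

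Applying the second bullet of \cref{Th2} then yields that the unique optimal solution of $\mathbf{P}^{\mathrm{sos}}_{\Vert\cdot\Vert_\sigma}$ is
\begin{equation*}
f^\star_{\mathrm{sos}}=\frac{b_{d,n}}{\Vert \mathrm{id}_N\Vert_\sigma}=b_{d,n},
\end{equation*}
with optimal value $\mathrm{opt}^{\mathrm{sos}}_{\Vert\cdot\Vert_\sigma}=\Vert \mathrm{id}_N\Vert_\sigma^{n/d}\,v(b_{d,n})=v(b_{d,n})=\frac{\sqrt{\pi}^n}{\Gamma(n/2+1)}$ by \eqref{v(b)}. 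This gives the inequality $v(b_{d,n})\leq v(f)$ for every sum of squares $f=m_{d/2}(x)^tGm_{d/2}(x)$ with $\Vert G\Vert_\sigma\leq 1$. Finally, the equality case $f=b_{d,n}$ is an immediate consequence of the uniqueness of the optimal solution in \cref{Th2}.

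There is essentially no obstacle here: the entire content is packaged in \cref{Th2}, and the only step requiring care is confirming $O(N)$-invariance of $\Vert\cdot\Vert_\sigma$, which follows directly from its definition via eigenvalues. In this sense the corollary is the $p\to+\infty$ limit of \cref{cor:Schatten}, since $N^{n/(dp)}\to 1$ as $p\to+\infty$.
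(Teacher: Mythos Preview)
Your proposal is correct and matches the paper's approach: the corollary is stated as an immediate consequence of \cref{Th2} together with the $O(N)$-invariance of the spectral norm (via its dependence on eigenvalues only) and the trivial computation $\Vert\mathrm{id}_N\Vert_\sigma=1$, combined with \eqref{v(b)}. The paper does not write out a separate proof, treating it exactly as the $p\to+\infty$ analogue of \cref{cor:Schatten}, which is precisely what you do.
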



\subsection{Probabilistic interpretation of results}

If for $f\in \mathcal{V}_{d,n}$ the sublevel set $\{f\leq1\}$ has finite Lebesgue volume, then by \cite[Thm. $2.2$]{lass-homog}
\begin{equation}\label{Laplace}
v(f)\,=\,\frac{1}{\Gamma(1+n/d)}\,\int_{\R^n}\exp(-f(x))\,dx,
\end{equation}
see also \cite{morozov}. When $\int_{\R^n}\exp(-f(x))\,dx=1$ the function
$x\mapsto \exp(-f(x))$ is the density of a probability measure $\mu_f$ on $\R^n$. In particular, if
$f^*(x)=\kappa\,\vert x\vert^d$ with
\begin{equation}
\label{kappa}
\kappa\,=\,\left(\frac{\Gamma(1+n/d)}{\Gamma(1+n/2)}\right)^{d/n}\pi^{d/2},\end{equation}
then $\mu_{f^*}$ is a Gaussian-like probability measure in the sense that all of its moments are easily obtained from those of a Gaussian measure, that is,
\begin{align}
\int_{\R^n}x^\alpha\,\exp(-f^*(x))\,dx\,=\,\frac{\Gamma(1+(n+\vert\alpha\vert)/d)}{\Gamma(1+(n+\vert\alpha\vert)/2)}
\int_{\R^n}x^\alpha\,\exp(-\kappa^{2/d}\,\vert x\vert^2)\,dx,\quad\forall\alpha\in\N^n.
\end{align}
By \cref{Laplace} and homogeneity, $f$ is the unique optimal solution of $\mathbf{P}_{\Vert\cdot\Vert}$ if and only if
$\left(\mathrm{opt}_{\Vert\cdot\Vert}\Gamma(1+n/d)\right)^{d/n} f$ is the unique optimal solution of the convex optimization problem
\begin{align}\label{equiv}
\mathbf{P}^*_{\Vert \cdot\Vert}:\quad\mathrm{opt}_{\Vert \cdot\Vert}^* = \inf\left\{\:\Vert f\Vert : \int_{\R^n} \exp(-f(x))\,dx\leq 1,\ f\in \mathcal{F}_{d,n}\right\}.
\end{align}
In light of this fact \cref{Th1} can be equivalently stated as follows.
\begin{theorem}\label{Th1-prime}
  Let $d$ be even and $\Vert\cdot\Vert:\mathcal{F}_{d,n}\rightarrow \R$ be a norm. Then
  \begin{itemize}
  \item the convex optimization problem $\mathbf{P}^*_{\Vert\cdot\Vert}$ has a unique optimal solution $f^\star\in \mathcal{V}_{d,n}$.
  \item If the norm $\Vert\cdot\Vert$ is $O(n)$-invariant, then $f^\star= \kappa\,b_{d,n}$, where\\ $b_{d,n}(x)=\vert x\vert^{d}$, $\kappa$ is as in \eqref{kappa}, and $\mathrm{opt}^*_{\Vert \cdot\Vert} =\kappa\, \Vert b_{d,n}\Vert$.
    \end{itemize}
\end{theorem}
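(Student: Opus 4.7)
The plan is to derive \cref{Th1-prime} as a direct corollary of \cref{Th1} via the scaling correspondence between the two problems, already foreshadowed in the paragraph preceding the statement. The key computational input is the pair of homogeneity identities: for $f\in\mathcal{V}_{d,n}$ and $t>0$, one has $v(tf)=t^{-n/d}v(f)$, and the substitution $y=t^{1/d}x$ gives
\begin{equation*}
\int_{\R^n}\exp(-tf(x))\,dx \,=\, t^{-n/d}\int_{\R^n}\exp(-f(x))\,dx,
\end{equation*}
which by \eqref{Laplace} is just a restatement. Consequently, each ray $\{tf:t>0\}\subset\mathcal{V}_{d,n}$ meets the boundary $\{\Vert h\Vert=1\}$ of the feasible set of $\mathbf{P}_{\Vert\cdot\Vert}$ in exactly one point, and meets $\{\int\exp(-h)\,dx=1\}$ in exactly one point.

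I would first show that the infimum in $\mathbf{P}^*_{\Vert\cdot\Vert}$ is attained on the equality set. If $\int\exp(-g)\,dx<1$, then $t:=(\int\exp(-g)\,dx)^{d/n}\in(0,1)$ satisfies $\int\exp(-tg)\,dx=1$ and $\Vert tg\Vert=t\Vert g\Vert<\Vert g\Vert$, yielding a strictly better feasible point. Parametrizing this boundary by $g=(\Gamma(1+n/d)\,v(f))^{d/n}f$ with $\Vert f\Vert=1$ (the unique representative of the ray in the feasible set of $\mathbf{P}_{\Vert\cdot\Vert}$), one obtains via \eqref{Laplace} that $\Vert g\Vert=(\Gamma(1+n/d)\,v(f))^{d/n}$, which is a strictly increasing function of $v(f)$. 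Minimizing $\Vert g\Vert$ over $\mathbf{P}^*_{\Vert\cdot\Vert}$ is thus equivalent to minimizing $v(f)$ over $\mathbf{P}_{\Vert\cdot\Vert}$, and \cref{Th1} supplies both existence and uniqueness of the optimum; explicitly, $g^\star=(\Gamma(1+n/d)\,\mathrm{opt}_{\Vert\cdot\Vert})^{d/n}\,f^\star$.

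For the second bullet, assume $\Vert\cdot\Vert$ is $O(n)$-invariant. Then \cref{Th1} yields $f^\star=b_{d,n}/\Vert b_{d,n}\Vert$ and $\mathrm{opt}_{\Vert\cdot\Vert}=\Vert b_{d,n}\Vert^{n/d}v(b_{d,n})$. Substituting into the formula above cancels the $\Vert b_{d,n}\Vert$ factors and leaves
\begin{equation*}
g^\star \,=\, \bigl(\Gamma(1+n/d)\,v(b_{d,n})\bigr)^{d/n}\,b_{d,n},
\end{equation*}
and plugging in \eqref{v(b)} reduces the coefficient to $\kappa=(\Gamma(1+n/d)/\Gamma(1+n/2))^{d/n}\pi^{d/2}$, matching \eqref{kappa}; finally $\mathrm{opt}^*_{\Vert\cdot\Vert}=\Vert g^\star\Vert=\kappa\Vert b_{d,n}\Vert$. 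The argument is essentially bookkeeping: the only genuinely nontrivial input is \cref{Th1}, and the only subtle step is verifying that the constraint in $\mathbf{P}^*_{\Vert\cdot\Vert}$ is active at the optimum, which is dispatched by the scaling trick in the middle paragraph.
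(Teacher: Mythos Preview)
Your proposal is correct and follows exactly the route the paper indicates: the paper does not give a separate proof of \cref{Th1-prime} but simply observes, in the sentence preceding it, that by \eqref{Laplace} and homogeneity the problems $\mathbf{P}_{\Vert\cdot\Vert}$ and $\mathbf{P}^*_{\Vert\cdot\Vert}$ are equivalent up to the explicit rescaling $f\mapsto (\mathrm{opt}_{\Vert\cdot\Vert}\Gamma(1+n/d))^{d/n}f$. Your argument is a fleshed-out version of this remark, including the verification that the constraint in $\mathbf{P}^*_{\Vert\cdot\Vert}$ is active and the explicit reduction of the scaling factor to $\kappa$.
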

\begin{remark}
If $\Vert\cdot\Vert$ is $O(n)$-invariant, then \cref{Th1-prime} implies that the Gaussian-like probability density $x\mapsto \exp(-\kappa\,\vert x\vert^d)$ minimizes $\Vert f\Vert$ over all probability measures $\mu_f$ with density $x\mapsto \exp(-f(x))$, where $f\in\mathcal{F}_{d,n}$ is a nonnegative form 
of degree $d$.
\end{remark}

\section{Preliminaries and auxiliary results}\label{sec:prel}
In this section we give necessary definitions and prove some auxiliary results that are needed in \cref{sec:main}.

Recall that $\mathcal{F}_{d,n}$ denotes the space of $n$-ary real forms (or homogeneous polynomials) of degree $d$ endowed with a norm $\Vert\cdot\Vert: \mathcal{F}_{d,n}\rightarrow \R$. Furthermore, recall that the group \\$O(n)=\{\rho\in \R^{n\times n}: \rho\rho^t=\mathrm{id}\}$ of orthogonal transformations acts on $\mathcal{F}_{d,n}$ as follows
\begin{align}\label{action2}
    \rho\in O(n), f\in \mathcal{F}_{d,n}\ \mapsto\ \rho^*f\in \mathcal{F}_{d,n},\ \rho^*f(x)=f(\rho^{-1}x).
\end{align}
For even $d$ the form
  \begin{align}\label{b-expansion}
    b_{d,n}(x) = (x_1^2+\dots+x_n^2)^{d/2} = \sum_{\vert \beta\vert=d/2} {d/2 \choose \beta} x_1^{2\beta_1}\dots x_n^{2\beta_n},\ x\in \R^n,
  \end{align}
is obviously invariant with respect to \eqref{action2}.
The following easy lemma asserts that $b_{d,n}$ is essentially the only invariant form.
\begin{lemma}\label{invariance}
Let $f\in \mathcal{F}_{d,n}$ be a non-zero form invariant under $O(n)$-action \cref{action}. Then $d$ is even and $f$ is proportional to $b_{d,n}$.
\end{lemma}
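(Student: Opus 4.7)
The plan is to combine transitivity of the $O(n)$-action on the unit sphere $\mathbb{S}^{n-1}$ with the homogeneity of $f$. This makes both conclusions of the lemma immediate, with parity of $d$ forced at the end so as to guarantee that the naive formula for $f$ defines an honest polynomial.

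First, I would dispose of parity. Since $-\mathrm{id}_n\in O(n)$, the assumed invariance gives $f(x)=(-\mathrm{id}_n)^*f(x)=f(-x)$ for every $x\in\R^n$. By homogeneity of degree $d$ one also has $f(-x)=(-1)^d f(x)$, so $(1-(-1)^d)\,f\equiv 0$. Because $f$ is non-zero, $d$ must be even.

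Next, I would use that $O(n)$ acts transitively on $\mathbb{S}^{n-1}$ (this is standard; even in the degenerate case $n=1$, $O(1)=\{\pm 1\}$ acts transitively on $\mathbb{S}^0=\{\pm 1\}$). For any $u_1,u_2\in \mathbb{S}^{n-1}$ pick $\rho\in O(n)$ with $\rho u_1=u_2$; then $f(u_2)=f(\rho u_1)=(\rho^{-1})^*f(u_1)=f(u_1)$ by invariance. Hence $f$ is constant on $\mathbb{S}^{n-1}$, equal to some $c\in \R$, and $c\neq 0$ since $f\not\equiv 0$.

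Finally, homogeneity of degree $d$ yields $f(x)=|x|^d\,f(x/|x|)=c\,|x|^d$ for every $x\neq 0$, and this identity extends to $x=0$ since $d>0$. Because $d$ has been shown to be even, $|x|^d=b_{d,n}(x)$ is a polynomial, and we conclude $f=c\,b_{d,n}$, proving proportionality. There is essentially no obstacle here; the only non-trivial point is that the a priori smooth identity $f(x)=c|x|^d$ is in fact polynomial, but this is exactly what the parity step guarantees.
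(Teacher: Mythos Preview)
Your proof is correct and follows essentially the same approach as the paper: both use $O(n)$-invariance to deduce that $f$ is constant on $\mathbb{S}^{n-1}$, invoke the antipodal map to force $d$ even, and then apply homogeneity to conclude $f=c\,b_{d,n}$. The only cosmetic difference is the order of the steps (you handle parity first, the paper handles it after constancy on the sphere).
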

\begin{proof}
$O(n)$-invariance of $f$ implies $f(x)=c$ whenever $\vert x\vert=1$, for some constant $c$.   If the degree $d$ is odd, we have $f(-x)=-f(x)$ for any $x\in\R^n$ and hence $f=0$. Thus $d$ must be even and by homogeneity of $f$
  \begin{align}
    f(x)=f\left(\vert x\vert \frac{x}{\vert x\vert}\right) =c\vert x\vert^d = c\,b_{d,n}(x)\quad \textrm{for any}\quad x\neq 0.
  \end{align} 
\end{proof}

For two real forms $f, g\in \mathcal{F}_{d,n}$ define their \emph{Bombieri product} as
\begin{align}\label{Bombieri}
  \langle f,g\rangle_B = \sum_{\vert\alpha\vert=d} f_\alpha \,g_\alpha, 
\end{align}
where $\{f_\alpha\}_{\vert\alpha\vert=d}$ and $\{g_\alpha\}_{\vert\alpha\vert=d}$ are the coefficients of $f$ and $g$ in the basis of rescaled monomials \eqref{expansion}. Equivalently, denoting by $f(\partial)$ the differential opearator obtained from $f$ by replacing variable $x_i$, $i=1,\dots,n$, with 
partial derivative $\partial/\partial x_i$,  one can show that
\begin{align}\label{eq:eval}
  \langle f,g\rangle_B = \frac{1}{d!}f(\partial)g(x).
\end{align}
From this, taking Bombieri product with a power of a linear form  
$x\mapsto f(x)=(y\cdot x)^d$, $y\in \R^n$, amounts to evaluation at $y$, that is,
\begin{align}\label{eq:eval}
  \langle f,g\rangle_B = g(y).
\end{align}

Recall that a form $f\in \mathcal{F}_{d,n}$ of even degree $d$ is called a \emph{sum of squares} if $f=s_1^2+\dots+s_r^2$ for some $s_1,\dots,s_r\in \mathcal{F}_{d/2,n}$. The following characterization of sums of squares is well-known;  we state it here as our version concerns rescaled monomials, cf. \cite[$\S 2$]{CLR1994}.
\begin{lemma}\label{lem:Gram}
  A form $f\in \mathcal{F}_{d,n}$ is a sum of squares if and only if there exists a positive semidefinite real symmetric matrix $G\in \mathcal{PSD}_N$ such that
  \begin{align}\label{eq:Gram}
    f(x)= m_{d/2}(x)^t G\, m_{d/2}(x),\ x\in \R^n,
  \end{align}
  where $N=\dim \mathcal{F}_{d/2,n}={d/2+n-1\choose n-1}$ and $m_{d/2}(x)$ is the column-vector of rescaled monomials $\sqrt{{d/2\choose \alpha}}x^\alpha$, $\vert \alpha\vert=d/2$, ordered with respect to a fixed order $\leq$.
\end{lemma}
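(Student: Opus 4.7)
The plan is to prove both implications by explicitly converting between a sum of squares decomposition of $f$ and a positive semidefinite Gram matrix $G$, using that $\{\sqrt{\binom{d/2}{\alpha}}\,x^\alpha\}_{|\alpha|=d/2}$ is a basis of $\mathcal{F}_{d/2,n}$ (the rescaling factors are nonzero, so this follows from the fact that the ordinary monomials form a basis).

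For the easy direction, assume $G\in \mathcal{PSD}_N$. Since $G$ is positive semidefinite and symmetric, it admits a decomposition $G=\sum_{k=1}^r v_k v_k^t$ for some vectors $v_k\in \R^N$ (e.g.\ via its spectral decomposition). Substituting into \eqref{eq:Gram} gives
\begin{align}
f(x)\,=\,m_{d/2}(x)^t\Bigl(\sum_{k=1}^r v_k v_k^t\Bigr) m_{d/2}(x)\,=\,\sum_{k=1}^r \bigl(v_k^t\,m_{d/2}(x)\bigr)^2,
\end{align}
and each $s_k(x):=v_k^t\,m_{d/2}(x)$ is a linear combination of the rescaled monomials of degree $d/2$, hence an element of $\mathcal{F}_{d/2,n}$. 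Thus $f$ is a sum of squares.

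For the converse, suppose $f=s_1^2+\cdots+s_r^2$ with $s_k\in \mathcal{F}_{d/2,n}$. Expanding each $s_k$ in the basis of rescaled monomials we can write uniquely $s_k(x)=v_k^t\,m_{d/2}(x)$ for some coefficient vector $v_k\in \R^N$. Then
\begin{align}
f(x)\,=\,\sum_{k=1}^r\bigl(v_k^t\,m_{d/2}(x)\bigr)^2\,=\,m_{d/2}(x)^t\Bigl(\sum_{k=1}^r v_k v_k^t\Bigr)m_{d/2}(x),
\end{align}
so setting $G:=\sum_{k=1}^r v_k v_k^t$ yields the required representation. The matrix $G$ is real symmetric and positive semidefinite as a sum of rank-one positive semidefinite matrices $v_k v_k^t$, which completes the proof.

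I do not anticipate a real obstacle here: the lemma is the standard Gram matrix characterization of SOS polynomials, and the only point specific to the rescaled-monomial basis is that $\{\sqrt{\binom{d/2}{\alpha}}\,x^\alpha\}$ spans $\mathcal{F}_{d/2,n}$, which is immediate. The result is independent of the particular total order $\leq$ chosen on the basis, since reordering amounts to simultaneous permutation of rows and columns of $G$, preserving symmetry and positive semidefiniteness.
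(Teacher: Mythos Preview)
Your proof is correct and essentially identical to the paper's: both directions are handled by writing $G=\sum_k v_k v_k^{\,t}$ (the paper uses the square root $G^{1/2}$ for the converse direction, yielding $N$ squares, but this is the same idea). No changes needed.
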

\begin{proof}
  If $f=s_1^2+\dots+s_r^2$, then $f=m_{d/2}(x)^tG \,m_{d/2}(x)$, where $G=\sum_{i=1}^r\vec{s}_i\vec{s}_i^{\,t}\in \mathcal{PSD}_N$ and $\vec{s}_i$ denotes the column-vector of coefficients of form $s_i$, $i=1,\dots, r$, in the basis of rescaled monomials ordered with respect to $\leq$. Conversely, if  \cref{eq:Gram} holds for some positive semidefinite matrix $G=G^{1/2}G^{1/2}\in\mathcal{PSD}_N$, then $f=s_1^2+\dots+s_N^2$, where $s_1,\dots, s_N\in\mathcal{F}_{d/2,n}$ are the entries of the $N$-dimensional vector of forms $G^{1/2}m_{d/2}(x)$.
\end{proof}
\section{Proof of main results}\label{sec:main}

In this section we prove our main results, \cref{Th1} and \cref{Th2}.

\begin{proofof}{\cref{Th1}}
  The proof of the fact that $\mathbf{P}_{\Vert\cdot\Vert}$ has a unique optimal solution is analogous to the one of \cite[Thm. $3.2$]{parsimony}; we give it here for the sake of completeness.

Let $\{f_k\}_{k\in \mathbb{N}}$ be a minimizing sequence of the optimization problem $\mathbf{P}_{\Vert\cdot\Vert}$, i.e., $\Vert f_k\Vert \leq 1$, $k\in \mathbb{N}$, and $\lim_{k\rightarrow +\infty}v(f_k) = \mathrm{opt}_{\Vert\cdot\Vert}$. By compactness of the unit ball of norm $\Vert\cdot\Vert$ there is a subsequence $\{f_{k_m}\}_{m\in \mathbb{N}}$ and $f^\star\in \mathcal{F}_{d,n}$ such that $\lim_{m\rightarrow+\infty} \Vert f_{k_m}- f^\star\Vert =0$ and $\Vert f^\star\Vert\leq 1$, meaning that $f^\star$ is feasible.
By \cite[Lemma $2.3$]{parsimony}, the function $v: \mathcal{F}_{d,n}\rightarrow \R_{\geq 0}\cup\{+\infty\}$ is lower-semicontinuous. This implies 
\begin{align}
 \mathrm{opt}_{\Vert \cdot\Vert} = \liminf_{m\rightarrow +\infty} v(f_{k_m}) \geq v(f^\star),
\end{align}
that is, $f^\star$ is an optimal solution of $\mathbf{P}_{\Vert\cdot\Vert}$.

Now, by \cite[Thm. $2.2$]{parsimony}, the function $v$ is strictly convex. Thus, if $\mathbf{P}_{\Vert\cdot\Vert}$ had two different optimal solutions $f_1^\star$ and $f_2^\star$, then for $\alpha\in (0,1)$ we would have

\begin{equation}
  \begin{aligned}
  \Vert \alpha f^\star_1+(1-\alpha)f^\star_2\Vert &\leq \alpha\Vert f^\star_1\Vert+(1-\alpha)\Vert f_2^\star\Vert \leq 1,\\
  v(\alpha f_1^\star+(1-\alpha)f^\star_2)&<\alpha v(f^\star_1)+(1-\alpha)v(f^\star_2) = \mathrm{opt}_{\Vert \cdot\Vert},
\end{aligned}
\end{equation}
a contradiction. Thus an optimal solution of $\mathbf{P}_{\Vert\cdot\Vert}$ is unique. Moreover, homogeneity of $\Vert\cdot\Vert$ and $v$ implies that the unique optimal solution $f^\star$ of $\mathbf{P}_{\Vert\cdot\Vert}$ must satisfy $\Vert f^\star\Vert=1$.

Let us now consider the case of an $O(n)$-invariant norm. Observe first that the volume function $v$ is $O(n)$-invariant, that is, $v(\rho^*f)=v(f)$ for any $f\in \mathcal{F}_{d,n}$ and $\rho\in O(n)$. Indeed, this follows directly from the definition of $v(f)$ and invariance of Lebesgue measure on $\R^n$. We claim that the optimal solution $f^\star$ of $\mathbf{P}_{\Vert\cdot\Vert}$ is $O(n)$-invariant. If not there exists $\rho\in O(n)$ such that $\rho^*f^\star\neq f^\star$. Then in view of $O(n)$-invariance of $v$ and $\Vert\cdot\Vert$, $f^\star$ and $\rho^*f^\star$ are two different optimal solutions of $\mathbf{P}_{\Vert\cdot\Vert}$, which is impossible by the above. \cref{invariance} implies that $f^\star$ is proportional to $b_{d,n}$ and since $\Vert f^{\star}\Vert=1$ we must have $f^\star = b_{d,n}/\Vert b_{d,n}\Vert$. As $v$ is homogeneous of degree $-n/d$, we obtain $\mathrm{opt}_{\Vert \cdot\Vert} =v(b_{d,n}/\Vert b_{d,n}\Vert) = \Vert b_{d,n}\Vert^{n/d} v(b_{d,n})$.

\end{proofof}
If $\Vert\cdot\Vert$ is a particular norm then by \cref{Th1}, computing the optimal value of $\mathbf{P}_{\Vert\cdot\Vert}$ reduces to computing $\Vert b_{d,n}\Vert$. We next evaluate $\Vert b_{d,n}\Vert$ 
for Bombieri norm, $L^p(\mathbb{S}^{n-1})$-norm, uniform norm on $\mathbb{S}^{n-1}$, nuclear norm, and thus prove Corollaries \ref{cor:Bomb}, \ref{cor:L^p}, \ref{cor:uniform} and \ref{cor:nucl}.
\begin{proofof}{\cref{cor:Bomb}}
  By \cite[$(8.19)$]{Reznick1992} we have
  \begin{align}\label{Bomb(b)}
    \Vert b_{d,n}\Vert_B = \sqrt{    \prod_{i=0}^{d/2-1} \frac{2i+n}{2i+1}}.
  \end{align}
Combining this formula with \eqref{v(b)} yields \eqref{opt:Bomb}.
\end{proofof}
\begin{proofof}{\cref{cor:L^p}}
  One has
  \begin{align}
    \Vert b_{d,n}\Vert_{L^p(\mathbb{S}^{n-1})} = \left(\int_{\mathbb{S}^{n-1}} |b_{d,n}(x)|^p\,d\mathbb{S}^{n-1}(x)\right)^{1/p} = \vol(\mathbb{S}^{n-1})^{1/p} = \left(\frac{2\sqrt{\pi}^n}{\Gamma\left(\frac{n}{2}\right)}\right)^{1/p},
  \end{align}
   which together with \cref{Th1} and \eqref{v(b)} yields \eqref{opt:L^p}.
 \end{proofof}
\cref{cor:uniform} follows from \cref{Th1}, \eqref{v(b)} and $\Vert b_{d,n}\Vert_{L^{\infty}(\mathbb{S}^{n-1})} = \max_{x\in \mathbb{S}^{n-1}}\vert x\vert^{d} = 1$. 
\begin{proofof}{\cref{cor:nucl}}
From a result of Hilbert 
\cite{Hilbert1909} it follows that there exist $r\in \mathbb{N}$, $\lambda_1,\dots, \lambda_r>0$ and $y^1,\dots,y^r\in \mathbb{S}^{n-1}$ such that
\begin{align}\label{eq:deco}
  b_{d,n}(x)=  \sum_{k=1}^r \lambda_k(y^k\cdot x)^d
\end{align}
and thus, invoking \cite[Example $1.1$]{Nie2017}, we have
\begin{align}
  \Vert b_{d,n}\Vert_* = \sum_{k=1}^r\lambda_k.
\end{align}
On the other hand, by \cref{eq:eval} and \cref{eq:deco},
\begin{align}
\Vert b_{d,n}\Vert_* = \sum_{k=1}^r\lambda_k = \sum_{k=1}^r\lambda_k \langle (y^k\cdot\bullet)^d, b_{d,n}\rangle_B=\langle b_{d,n},b_{d,n}\rangle_B = \Vert b_{d,n}\Vert^2_B,
\end{align}
and \eqref{cor:nucl} follows from \cref{Bomb(b)} and \cref{v(b)}.
\end{proofof}

We now prove \cref{Th2}.
\begin{proofof}{\cref{Th2}}
  Let us observe first that convexity of the feasible set
  \begin{align}
  \{f\in \mathcal{F}_{d,n}: f=m_{d/2}(x)^tG\, m_{d/2}(x),\ G\in \mathcal{PSD}_N,\ \Vert G\Vert\leq 1\}  
  \end{align}
of optimization problem $\mathbf{P}_{\Vert \cdot\Vert}^{\mathrm{sos}}$ follows directly from convexity of the cone $\mathcal{PSD}_N$ of positive semidefinite matrices and convexity of norm $\Vert\cdot\Vert$. 
This fact combined with convexity of the function $v$ (see \cite[Thm. $2.2$]{parsimony})  implies that $\mathbf{P}_{\Vert\cdot\Vert}^{\mathrm{sos}}$ is a convex optimization problem.

Let $\{f_k=m_{d/2}(x)^t G_k\, m_{d/2}(x)\}_{k\in \mathbb{N}}$ be a minimizing sequence of $\mathbf{P}_{\Vert\cdot\Vert}^{\mathrm{sos}}$, i.e., $G_k\in \mathcal{PSD}_N$, $\Vert G_k\Vert\leq 1$, $k\in \mathbb{N}$, and $\lim_{k\rightarrow +\infty} v(f_k) = \mathrm{opt}_{\Vert\cdot\Vert}^{\mathrm{sos}}$. Since $\{G\in \mathcal{PSD}_N: \Vert G\Vert\leq 1\}$ is compact, there is a subsequence $\{G_{k_m}\}_{m\in \mathbb{N}}$ and a matrix $G^\star\in \mathcal{PSD}_N$, $\Vert G^\star\Vert\leq 1$, such that $\lim_{m\rightarrow +\infty} \Vert G_{k_m}-G^\star\Vert =0$. In particular, the sum of squares form $f^\star = m_{d/2}(x)^t G^\star \,m_{d/2}(x)\in \mathcal{F}_{d,n}$ is feasible for $\mathbf{P}_{\Vert\cdot\Vert}^{\mathrm{sos}}$ and coefficients of $f_{k_m}$ converge 
to coefficients of $f^\star$, as $m\rightarrow+\infty$. Next, since the function  $v:\mathcal{F}_{d,n}\rightarrow \R_{\geq 0}\cup\{+\infty\}$ is lower-semicontinuous \cite[Lemma $2.3$]{parsimony} we have
\begin{align}
  \mathrm{opt}_{\Vert\cdot\Vert}^{\mathrm{sos}} = \liminf_{m\rightarrow +\infty} v(f_{k_m}) \geq v(f^\star),
\end{align}
that is, $f^\star$ is an optimal solution of $\mathbf{P}_{\Vert\cdot\Vert}^{\mathrm{sos}}$.  Exactly in the same way as in the proof of \cref{Th1}, strict convexity of $v$ implies that $f^\star$ is a unique optimal solution of $\mathbf{P}_{\Vert \cdot\Vert}^{\mathrm{sos}}$. Also, from homogeneity of $v$ and $\Vert\cdot\Vert$ we obtain $\Vert G^\star \Vert=1$.

Let now $\Vert\cdot\Vert:\mathcal{S}_N \rightarrow \R$ be an $O(N)$-invariant norm. If $f=m_{d/2}(x)^t G\,m_{d/2}(x)$ is feasible for $\mathbf{P}_{\Vert\cdot\Vert}^{\mathrm{sos}}$, i.e., $G\in \mathcal{PSD}_N$ and $\Vert G\Vert\leq 1$, then so is $\rho^*f$ for any $\rho\in O(n)$. Indeed, since Bombieri product \eqref{Bombieri} is invariant under $O(n)$-action \eqref{action} and since the rescaled monomials $\sqrt{{d/2\choose \alpha}}x^\alpha$, $\vert\alpha\vert=d/2$, form an orthonormal basis of $\mathcal{F}_{d/2,n}$ with respect to Bombieri product, for any $\rho\in O(n)$ there exists $R=R(\rho)\in O(N)$ such that
\begin{align}
\rho^*f = m_{d/2}(x)^t R^tGR\,m_{d/2}(x).
\end{align}
Hence, by $O(N)$-invariance of $\mathcal{PSD}_N$ and $\Vert\cdot\Vert$, we have $R^tGR\in \mathcal{PSD}_N$ and $\Vert R^tGR\Vert=\Vert G\Vert\leq 1$ or, in other words, $\rho^*f$ is feasible for $\mathbf{P}_{\Vert\cdot\Vert}^{\mathrm{sos}}$. Therefore the unique optimal solution $f^\star$ of $\mathbf{P}_{\Vert\cdot\Vert}^{\mathrm{sos}}$ must be $O(n)$-invariant, that is, $\rho^*f^\star=f^\star$ for all $\rho\in O(n)$. 

Next, by \cref{invariance}, $f^\star$ is proportional to $b_{d,n}$. From \eqref{b-expansion} we have that $b_{d,n} = m_{d/2}(x)^tm_{d/2}(x)$, namely the identity matrix $\mathrm{id}_N\in \mathcal{PSD}_N$ is a Gram matrix of $b_{d,n}$. Therefore $f^\star = b_{d,n}/\Vert \mathrm{id}_N\Vert$ as its Gram matrix satisfies $\Vert \mathrm{id}_N/\Vert \mathrm{id}_N\Vert\Vert =1$. Also, $\mathrm{opt}_{\Vert\cdot\Vert}^{\mathrm{sos}} = v(b_{d,n}/\Vert\mathrm{id}_N\Vert) = \Vert \mathrm{id}_N\Vert^{n/d}v(b_{d,n})$ by homogeneity of the volume function $v$.
\end{proofof}

\section{Conclusion}

We have provided new volume-minimizing properties of the Euclidean unit ball. 
In contrast to its intrinsic geometric properties, they are attached to its representation as the 
sublevel set of a form of fixed even degree. The minimum is over all nonnegative forms of same degree
with bounded norm or over sum of squares forms of same degree whose Gram matrix has bounded norm, for certain families of norms.

\section*{Acknowledgements}
We are thankful to Jiawang Nie for helping us with the proof of Corollary \ref{cor:nucl} and to anonymous referees for their useful comments and remarks.

\bibliographystyle{amsalpha}


\end{document}